\documentclass[a4paper,12pt]{article}
\usepackage{amssymb,amsmath,amsthm,latexsym}
\usepackage{amsfonts}
\usepackage{graphicx,caption,subcaption}
\usepackage[pdftex,bookmarks,colorlinks=false]{hyperref}
\usepackage[hmargin=1.2in,vmargin=1.2in]{geometry}
\usepackage{enumitem}
\usepackage{authblk}
\setcounter{Maxaffil}{3}

\usepackage{fancyhdr}
\pagestyle{fancy}
\lhead{\tiny Topological Integer Additive Set-Graceful Graphs}
\chead{}
\rhead{\tiny N. K. Sudev, K. A. Germina \&  K. P. Chithra}
\lfoot{}
\cfoot{\thepage}
\rfoot{}

\newtheorem{thm}{Theorem}[section]

\newtheorem{defn}[thm]{Definition}

\newtheorem{prob}[thm]{Problem}
\newtheorem{prop}[thm]{Proposition}
\newtheorem{rem}[thm]{Remark}

\setlength{\parskip}{2.5pt}

\title{\sc Topological Integer Additive Set-Graceful Graphs}

\author{\sc N. K.Sudev\footnote{Corresponding author}}
\affil{\small Department of Mathematics,\\ Vidya Academy of Science \& Technology, \\ Thalakkottukara, Thrissur - 680501, Kerala, India,\\ email: {\em sudevnk@gmail.com}}

\author{\sc K. A. Germina}
\affil{\small PG \& Research Department of Mathematics,\\ Mary Matha Arts \& Science College, \\ Mananthavady, Wayanad - 670645, Kerala, India,\\ email: {\em srgerminaka@gmail.com}}

\author{K. P. Chithra}
\affil{\small Naduvath Mana, Nandikkara\\ Thrissur-680301, Kerala, India, \\ email: {\em chithrasudev@gmail.com}}

\date{}

\begin{document}
\maketitle
	
\begin{abstract}
Let $\mathbb{N}_0$ denote the set of all non-negative integers and $X$ be any subset of $X$. Also denote the power set of $X$ by $\mathcal{P}(X)$. An integer additive set-labeling (IASL) of a graph $G$ is an injective function $f:V(G)\to \mathcal{P}(X)$ such that the induced function $f^+:E(G) \to \mathcal{P}(X)$ is defined by $f^+ (uv) = f(u)+ f(v)$, where $f(u)+f(v)$ is the sumset of $f(u)$ and $f(v)$. An IASL $f$ is said to be a topological IASL (Top-IASL) if $f(V(G))\cup \{\emptyset\}$ is a topology of the ground set $X$. An IASL is said to be an integer additive set-graceful labeling (IASGL) if for the induced edge-function $f^+$, $f^+(E(G))= \mathcal{P}(X)-\{\emptyset, \{0\}\}$. In this paper, we study certain types of IASL of a given graph $G$, which is a topological integer additive set-labeling as well as an integer additive set-graceful labeling of $G$. 
\end{abstract}

\noindent \textbf{Key words}: Integer additive set-labeling, integer additive set-graceful labeling, topological integer additive set-labeling, topological integer additive set-graceful labeling.
	
\vspace{0.05in}
\noindent \textbf{AMS Subject Classification: 05C78} 
	
\section{Introduction}

For all terms and definitions, other than newly defined or specifically mentioned in this paper, we refer to \cite{BM}, \cite{FH} and \cite{DBW}. For different graph classes, we further refer to \cite{BLS} and \cite{JAG}. Unless mentioned otherwise, the graphs considered in this paper are simple, finite, non-trivial and connected.

Let $A$and $B$ be two non-empty sets. The {\em sumset} of $A$ and $B$ is denoted by $A+B$ and is defined by $A+B=\{a+b: a\in A, b\in B\}$. Using the concepts of sumsets of two sets, the following notion has been introduced.

Let $\mathbb{N}_0$ denote the set of all non-negative integers and $\mathcal{P}(\mathbb{N}_0)$ be its power set. An {\em integer additive set-labeling} (IASL) of a graph $G$ is an injective function $f:V(G)\to \mathcal{P}(\mathbb{N}_0)$ such that the induced function $f^+:E(G) \to \mathcal{P}(\mathbb{N}_0)$ is defined by $f^+ (uv) = f(u)+ f(v)$, where $f(u)+f(v)$ is the sumset of $f(u)$ and $f(v)$. A graph that admits an IASL is called an {\em integer additive set-labeled graph} (IASL-graph).

An IASL $f$ of a given graph $G$ is said to be an {\em integer additive set-indexer} (IASI) if the associated function $f^+$ is also injective. 

An IASL (or an IASI) $f$ of a graph $G$ is said to be a $k$-uniform IASL (or a $k$-uniform IASI) if $f^+(uv)=k~\forall~uv\in E(G)$.

The cardinality of the set-label of an element (a vertex or an edge) of a graph $G$ is said to be the {\em set-indexing number} of that element. An element of a graph $G$ is said to be {\em mono-indexed} if its set-indexing number is $1$. 

Since the set-label of every edge of $G$ is the sumset of the set-labels of its end vertices, it can be noted that no vertex of an IASL-graph $G$ can have the empty set as its set-label. If any of the given two sets is countably infinite, then their sumset is also a countably infinite set. Hence, all sets we consider in this paper are non-empty finite sets of non-negative integers.

An {\em integer additive set-graceful labeling} (IASGL) of a graph $G$ is defined in \cite{GS14} as an integer additive set-labeling $f:V(G)\to \mathcal{P}(X)-\{\emptyset\}$ such that the induced function $f^{+}(E(G))=\mathcal{P}(X)-\{\emptyset,\{0\}\}$. A graph $G$ which admits an integer additive set-graceful labeling is called an {\em integer additive set-graceful graph} (in short, IASG-graph).

\noindent The major results on IASG-graphs, established in \cite{GS14}, are the following.

\begin{prop}\label{P-IASGL0a}
{\rm \cite{GS14}} If $f:V(G)\to \mathcal{P}(X)-\{\emptyset\}$ is an integer additive set-graceful labeling on a given graph $G$, then $\{0\}$ must be a set-label of one vertex of $G$.
\end{prop}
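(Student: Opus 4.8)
The plan is to use the defining surjectivity $f^{+}(E(G)) = \mathcal{P}(X) - \{\emptyset, \{0\}\}$ at the level of \emph{singleton} subsets of $X$: every singleton of $X$ except $\{0\}$ must occur as an edge label, and I would trace such a singleton edge-label back through the sumset operation to its end-vertices. The tool that makes this possible is the elementary cardinality estimate for sumsets of finite sets of integers, namely $|A+B| \ge |A| + |B| - 1$ for non-empty finite $A, B \subseteq \mathbb{N}_0$. From this it follows that an edge label $f^{+}(uv) = f(u) + f(v)$ which happens to be a singleton forces both $f(u)$ and $f(v)$ to be singletons, say $f(u) = \{a\}$ and $f(v) = \{b\}$, in which case $f^{+}(uv) = \{a+b\}$.

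First I would locate $0$ inside the ground set. Writing $x_0 = \min X$, I claim $x_0 = 0$: if instead $x_0 > 0$, then $\{x_0\}$ is neither $\emptyset$ nor $\{0\}$ and so must be an edge label; by the cardinality remark it equals $\{a+b\}$ with $a, b \in X$, whence $a, b \ge x_0$ and $a+b \ge 2x_0 > x_0$, a contradiction. Hence $0 \in X$. Next, the heart of the argument, I would pass to the least positive element $x_1$ of $X$; this exists because $G$ is non-trivial and connected, so it has at least one edge, forcing $\mathcal{P}(X) - \{\emptyset, \{0\}\}$ to be non-empty and $|X| \ge 2$. The singleton $\{x_1\}$ is a required edge label, so $\{x_1\} = \{a\} + \{b\}$ with $a, b \in X$ and $a + b = x_1$. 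Were both $a$ and $b$ positive, minimality of $x_1$ would give $a, b \ge x_1$ and $a+b \ge 2x_1 > x_1$; so one of them must be $0$, and the corresponding vertex carries the label $\{0\}$, which is exactly what is to be shown.

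The steps that need care, rather than length, are the passage from a singleton edge-label to singleton vertex-labels (this is where the sumset cardinality bound is essential, and without it the conclusion could fail), and the small bookkeeping guaranteeing that a least positive element of $X$ actually exists. Once those are in place, the result reduces to a one-line minimality argument, so I do not expect a serious obstacle; the only thing to watch is the degenerate possibility $X = \{0\}$, which is ruled out precisely by the connectedness and non-triviality of $G$.
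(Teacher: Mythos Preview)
Your argument is correct. The paper itself does not supply a proof of this proposition; it is quoted as a known result from reference \cite{GS14}, so there is no in-paper proof to compare against. That said, your route via the sumset cardinality inequality $|A+B|\ge |A|+|B|-1$ is the natural one and works cleanly: it forces any singleton edge-label to arise from singleton vertex-labels, after which the two minimality arguments (first for $\min X$ to get $0\in X$, then for the least positive element $x_1$ to force a vertex labelled $\{0\}$) go through exactly as you describe. Your handling of the degenerate case $X=\{0\}$ via the standing hypothesis that $G$ is connected and non-trivial is also appropriate in this paper's setting.

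One very minor point of phrasing: when you write ``$\{x_1\}=\{a\}+\{b\}$ with $a,b\in X$'', the containment $a,b\in X$ is justified because $f(u),f(v)\subseteq X$ by the definition of $f$; you might make that dependence explicit, since it is what allows the minimality of $x_1$ in $X$ to bite. Otherwise there is nothing to repair.
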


\begin{thm}
Let $G$ be an IASG-graph which admits an IASGL $f$ with respect to a finite non-empty set $X$. Then, $G$ must have at least $|X|-1$ pendant vertices.
\end{thm}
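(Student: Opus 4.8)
The plan is to locate the required pendant vertices among those whose set-label contains the largest element of $X$. Write $m=\max X$ and note $m>0$ since $|X|\ge 2$. By Proposition~\ref{P-IASGL0a}, $\{0\}$ is the label of some vertex $v_0$, and $v_0$ is unique because $f$ is injective. The first step is an elementary but decisive observation about the maximum element of a sumset: for any edge $uv$ one has $\max f^+(uv)=\max\big(f(u)+f(v)\big)=\max f(u)+\max f(v)$.

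The key lemma I would prove next is that every vertex $w$ with $m\in f(w)$ is a pendant vertex whose only neighbour is $v_0$. Indeed, if $wz\in E(G)$ then $\max f(w)+\max f(z)=\max f^+(wz)\le m$, and since $\max f(w)=m$ this forces $\max f(z)\le 0$, i.e. $f(z)=\{0\}$ and hence $z=v_0$. As $G$ is connected and non-trivial, $w$ then has degree exactly $1$; and $w\ne v_0$ because $m>0$. Thus it suffices to produce $|X|-1$ distinct vertices whose labels contain $m$.

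Here I would invoke the gracefulness hypothesis $f^+(E(G))=\mathcal{P}(X)-\{\emptyset,\{0\}\}$: every subset $A$ with $m\in A$ (so $A\ne\emptyset,\{0\}$) must occur as an edge-label $f(u)+f(v)$, whence $\max f(u)+\max f(v)=m$. If neither maximum equals $m$, both lie in $[1,m-1]$ and sum to $m$; I call this a \emph{genuine} decomposition of $A$. If $A$ admits no genuine decomposition, then one summand must be $\{0\}$ and the other must equal $A$, so $A$ is itself the label of a vertex containing $m$. I would then exhibit the explicit family consisting of $\{0,m\}$ together with $\{0,c,m\}$ for each of the $|X|-2$ elements $c$ of $X$ with $0<c<m$. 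This is a list of $|X|-1$ distinct subsets, each containing $m$; if none admits a genuine decomposition, each forces a distinct vertex labelled by it, and by the lemma each such vertex is pendant, giving the bound.

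The main obstacle is exactly this last verification, that the listed sets are genuinely indecomposable. The mechanism is that in a genuine decomposition $A=B+C$ with $0\in A$ one has $\min B=\min C=0$, so both $\max B$ and $\max C$ lie in $A\cap[1,m-1]$ and sum to $m$; every set in my family contains at most one such middle element, so the only possible escape is $\max B=\max C=c$ with $2c=m$. This residual case must be excluded separately, and here the injectivity of $f$ (equivalently, the absence of loops) is essential: such a decomposition would force $B=C=\{0,m/2\}$, a single label that cannot be carried by the two distinct end-vertices of an edge. Once this point is dispatched, the count of $|X|-1$ forced pendant vertices follows immediately.
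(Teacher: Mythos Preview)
Your argument is correct. The key lemma you prove---that any vertex whose set-label contains $m=\max X$ is a pendant neighbour of the vertex labelled $\{0\}$---is precisely Proposition~\ref{P-IASGL2} of the paper, so that step may simply be quoted. The substantive contribution is your explicit family $\{0,m\}$ together with $\{0,c,m\}$ for the $|X|-2$ elements $c\in X$ with $0<c<m$, and the verification that each such set, when realised as an edge-label $f(u)+f(v)$, forces one endpoint to carry $\{0\}$ and the other to carry the set itself. Your disposal of the residual case $2c=m$ via injectivity of $f$ on a simple graph (the only genuine decomposition would require $f(u)=f(v)=\{0,c\}$) is clean and necessary.

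As for comparison: the paper provides no proof of this theorem. It is stated in the introductory survey of results from \cite{GS14} (the citation marker appears to have been accidentally omitted from this item alone), so there is no in-paper argument against which to set yours. Your approach is fully compatible with the surrounding propositions and, in particular, supplies the explicit count of $|X|-1$ forced pendant vertices that Proposition~\ref{P-IASGL2} by itself does not deliver.
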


\begin{prop}\label{O-IASGL1b}
{\rm \cite{GS14}} Let $G$ be an IASG-graph. Then, there are at least $1+2^{n-1}$ vertices of $G$ adjacent to the vertex having the set-label $\{0\}$, where $n$ is the cardinality of the ground set $X$.
\end{prop}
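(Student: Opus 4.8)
The plan is to anchor the argument at the vertex guaranteed by Proposition \ref{P-IASGL0a}. Let $v_0$ be the vertex with $f(v_0)=\{0\}$ and put $n=|X|$. The starting observation is the identity $\{0\}+A=A$, valid for every $A\subseteq X$. Hence for every neighbour $w$ of $v_0$ the induced edge-label is $f^{+}(v_0w)=\{0\}+f(w)=f(w)$, so the family of edge-labels on edges incident with $v_0$ is exactly $\{f(w):w\sim v_0\}$; since $f$ is injective on vertices, these labels are pairwise distinct and $\deg(v_0)$ equals the number of distinct neighbour-labels. Thus it suffices to exhibit at least $1+2^{n-1}$ subsets of $X$ that are forced to occur as labels of neighbours of $v_0$.

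The second ingredient is a sumset fact: since every set-label consists of non-negative integers, $0\in f(u)+f(w)$ forces $0\in f(u)$ and $0\in f(w)$. Consequently a subset $A\subseteq X$ with $0\in A$ can arise as an edge-label $f^{+}(uw)=f(u)+f(w)$ only when both endpoints carry $0$ in their labels. There are exactly $2^{n-1}$ subsets of $X$ containing $0$; one of them is $\{0\}$ itself (the label of $v_0$, excluded from $f^{+}(E(G))$), and the remaining $2^{n-1}-1$ are required edge-labels because $f^{+}(E(G))=\mathcal{P}(X)-\{\emptyset,\{0\}\}$. The heart of the matter is the claim that each such $A$ is in fact pinned to $v_0$, i.e. that the only admissible factorisation $A=f(u)+f(w)$ is the trivial $A=\{0\}+A$, whence some neighbour of $v_0$ is labelled $A$. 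To complete the stated count I would adjoin the forced singleton $\{x_0\}$ with $x_0=\min(X\setminus\{0\})$: two non-zero elements of $X$ sum to at least $2x_0>x_0$, so $\{x_0\}$ has no representation $\{a\}+\{b\}$ other than $\{0\}+\{x_0\}$ and is a forced neighbour-label distinct from the $0$-containing ones.

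The main obstacle is precisely the decomposition-control claim. Injectivity of $f$ is indispensable, since it forbids repeated-summand factorisations such as $A=\{0,x\}+\{0,x\}$ (which would otherwise reproduce $\{0,x,2x\}$); after excluding equal summands, one must rule out a factorisation $A=B+C$ by two \emph{distinct} proper subsets each containing $0$ while remaining inside $X$. This is where the additive structure of the ground set genuinely enters, and it can fail locally: for instance $\{0,1\}+\{0,2\}=\{0,1,2,3\}$ shows that not every $0$-containing subset is individually indecomposable. I therefore expect that closing the gap to the exact bound $1+2^{n-1}$ cannot rest on the indecomposability of each set in isolation, but must invoke the \emph{global} requirement that every member of $\mathcal{P}(X)-\{\emptyset,\{0\}\}$ be realised: one argues that the $0$-containing labels that do factor non-trivially cannot all be produced by edges avoiding $v_0$ simultaneously, so at least one further neighbour of $v_0$ is forced. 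Making this counting precise, and reconciling it with the injectivity-driven local forcing, is the step I expect to demand the most care.
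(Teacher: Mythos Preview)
First, note that the present paper does not prove this proposition: it is quoted verbatim from \cite{GS14} and stated without argument, so there is no in-paper proof to compare your attempt against. What follows therefore evaluates your proposal on its own terms.

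Your proposal contains a genuine gap, and to your credit you already isolate it. The implication ``$0\in A$ and $A=f(u)+f(w)$ $\Rightarrow$ one of $u,w$ is $v_0$'' is simply false in general: your own example $\{0,1\}+\{0,2\}=\{0,1,2,3\}$ shows that a $0$-containing edge-label can be produced by an edge avoiding $v_0$, provided both endpoints carry labels containing $0$. Injectivity of $f$ only blocks the equal-summand case $B+B$, not distinct summands $B+C$ with $0\in B\cap C$. Your proposed repair --- a global counting argument showing that the decomposable $0$-containing labels ``cannot all be produced by edges avoiding $v_0$ simultaneously'' --- is a hope rather than an argument: you give no mechanism (pigeonhole, injectivity of $f^{+}$, parity, etc.) that would force the extra incidences with $v_0$, and it is not clear one exists at this level of generality for arbitrary ground sets $X$.

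There is also an arithmetic slip. Even granting your decomposition claim, you exhibit $(2^{n-1}-1)$ forced neighbour-labels containing $0$ and then adjoin the single set $\{x_0\}$, for a total of $2^{n-1}$ forced neighbours. The stated bound is $1+2^{n-1}$, so your outline is one short of the target regardless of the gap above. If you wish to pursue this line, you would need either an additional forced neighbour-label or a different family of size $2^{n-1}$ to which you then add \emph{two} further forced labels.
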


\begin{prop}\label{P-IASGL2}
{\rm \cite{GS14}} Let $f:V(G)\to \mathcal{P}(X)-\{\emptyset\}$ be an integer additive set-graceful labeling on a given graph $G$ and let $x_n$ be the maximal element of $X$. Then, $x_n$ is an element of the set-label of a vertex $v$ of $G$ if $v$ is a pendant vertex that is adjacent to the vertex labeled by $\{0\}$.
\end{prop}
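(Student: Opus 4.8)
The plan is to anchor everything on the vertex carrying the label $\{0\}$. By Proposition \ref{P-IASGL0a} there is a vertex $u$ with $f(u)=\{0\}$, and since $f$ is injective this vertex is unique. Let $v$ be a pendant vertex adjacent to $u$. Because $v$ has degree one, its only incident edge is $uv$, and since $\{0\}+A=A$ for every $A\in\mathcal P(X)$ we obtain $f^{+}(uv)=f(u)+f(v)=f(v)$. Thus the label of $v$ coincides with the label of its unique edge, and the claim reduces to showing that this edge label contains the maximal element $x_n$.

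The engine I would use is the elementary bound that for any edge $ab$ one has $f(a)+f(b)\subseteq X$, so $\max f(a)+\max f(b)=\max\bigl(f(a)+f(b)\bigr)\le x_n$. In particular, if $x_n\in f(w)$ for some vertex $w$, then $\max f(w)=x_n$, forcing $\max f(z)=0$ and hence $f(z)=\{0\}$ for every neighbour $z$ of $w$; by injectivity $w$ then has a single neighbour, namely $u$. This records the structural fact that an element equal to $x_n$ can only sit on a pendant neighbour of $u$. To convert this into the asserted implication, I would next try a counting argument over the $2^{\,n-1}$ subsets of $X$ that contain $x_n$: each must occur as an edge label, and I would attempt to prove that each such subset $S$ can be realised only as $f^{+}(uw)=f(w)$ for a pendant neighbour $w$ of $u$ with $f(w)=S$, thereby matching the pendant neighbours of $u$ to the $x_n$-containing subsets and concluding that every such $v$ carries $x_n$.

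The hard part — and, I expect, the decisive obstacle — is exactly this matching step, because a subset containing $x_n$ need \emph{not} be forced onto an edge at $u$. Writing $x_n=p+q$ with $p\in f(a)$, $q\in f(b)$ and $p,q<x_n$ shows that a set such as $\{x_i,x_n\}$ can arise as a sumset of two labels neither of which contains $x_n$, along an edge not incident to $u$; dually, a pendant neighbour of $u$ may carry a small label omitting $x_n$. Indeed, for $n\ge 3$ the star with centre labelled $\{0\}$ and leaves labelled by all of $\mathcal P(X)-\{\emptyset,\{0\}\}$ is an IASG-graph (its spoke labels are precisely $\mathcal P(X)-\{\emptyset,\{0\}\}$), in which the leaf labelled by a non-maximal non-zero singleton is a pendant neighbour of $u$ with $x_n\notin f(v)$. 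Consequently the counting argument does not close, and what the engine actually yields is only the reverse implication, namely that every vertex whose label contains $x_n$ is a pendant neighbour of $u$. I would therefore flag that the displayed ``if'' direction holds without exception only for $n=2$, and that in general it requires an additional hypothesis forcing each pendant neighbour of $u$ to carry an $x_n$-containing label; this is the step I do not expect to be completable as stated.
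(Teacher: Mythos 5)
Your reading of the situation is exactly right, and your argument is sound. One point of context you could not have known: this paper states Proposition \ref{P-IASGL2} without proof --- it is quoted from \cite{GS14} as a preliminary result --- so the only thing to compare against is the intended content of the statement. That intended content is the \emph{converse} of the literal wording: ``if'' should read ``only if'', as the parallel result for Top-IASLs, Proposition \ref{P-TIASI2}, makes plain (vertices whose set-labels contain the maximal element of $X$ \emph{are} pendant vertices adjacent to the vertex labelled $\{0\}$). Your ``engine'' is precisely the standard proof of that direction: since $f^+(E(G))=\mathcal{P}(X)-\{\emptyset,\{0\}\}$, every edge label is a subset of $X$, so $\max f(a)+\max f(b)=\max\bigl(f(a)+f(b)\bigr)\le x_n$ for every edge $ab$; hence $x_n\in f(w)$ forces $f(z)=\{0\}$ for every neighbour $z$ of $w$, and injectivity of $f$ together with Proposition \ref{P-IASGL0a} makes $w$ a pendant vertex whose unique neighbour is the vertex labelled $\{0\}$. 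Nothing further is needed for the proposition in its intended form.

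Your refutation of the literal ``if'' direction is also correct: for $|X|\ge 3$ the star $K_{1,2^{|X|}-2}$ with centre labelled $\{0\}$ and leaves carrying all remaining non-empty subsets of $X$ is an IASG-graph (it is exactly the tree of Theorem \ref{T-IASGL6b}), and the leaf labelled by a non-zero non-maximal singleton is a pendant neighbour of the centre whose label omits $x_n$. So the matching step you were attempting cannot close, and rightly so --- the failure is a defect in the wording of the statement, not a gap in your argument. The one adjustment I would make is presentational: rather than flagging the ``if'' direction as an obstacle you could not overcome, state outright that the proposition as transcribed is false, exhibit the star counterexample, and present your maximality argument as a complete proof of the corrected (``only if'') statement.
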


\begin{prop}\label{P-IASGL2a}
{\rm \cite{GS14}} Let $A_i$ and $A_j$ are two distinct subsets of the ground set $X$ and let $x_i$ and $x_j$ be the maximal elements of $A_i$ and $A_j$ respectively. Then, $A_i$ and $A_j$ are the set-labels of two adjacent vertices of an IASG-graph $G$ is that $x_i+x_j\le x_n$, the maximal element of $X$.
\end{prop}

\begin{thm}\label{T-IASGL3}
{\rm \cite{GS14}} A graph $G$ admits an integer additive set-graceful labeling, then it has even number of edges.  
\end{thm}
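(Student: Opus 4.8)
The plan is to turn the parity question into a cardinality count. By definition an IASGL forces the collection of edge set-labels to be exactly $\mathcal{P}(X)-\{\emptyset,\{0\}\}$, so I would first record that this target set has cardinality $2^{n}-2$, where $n=|X|$ (the full power set has $2^{n}$ members and we delete the two sets $\emptyset$ and $\{0\}$). Since $2^{n}-2=2(2^{n-1}-1)$ is even, it suffices to prove that $|E(G)|$ equals this number.

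To obtain $|E(G)|=2^{n}-2$, I would exhibit a bijection between $E(G)$ and $\mathcal{P}(X)-\{\emptyset,\{0\}\}$ given by the induced map $f^{+}$. Surjectivity is immediate, as it is precisely the defining condition $f^{+}(E(G))=\mathcal{P}(X)-\{\emptyset,\{0\}\}$. The substantive step is injectivity: distinct edges must receive distinct set-labels. Granting this, $f^{+}$ is a bijection, hence $|E(G)|=|\mathcal{P}(X)-\{\emptyset,\{0\}\}|=2^{n}-2$, and the statement follows at once from the parity of $2^{n}-2$.

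The main obstacle, and the real content of the argument, is this injectivity of $f^{+}$ on $E(G)$. It cannot be taken for granted from surjectivity alone, because a single subset of $X$ can arise as a sumset in several ways (for instance $\{1,2\}=\{0\}+\{1,2\}=\{1\}+\{0,1\}$), so two different edges could in principle share a label and inflate the edge count past $2^{n}-2$. I would therefore argue that the graceful structure precludes such repetitions, treating the graceful labeling as assigning each admissible subset to exactly one edge, so that $f^{+}$ restricts to a one-to-one correspondence with $\mathcal{P}(X)-\{\emptyset,\{0\}\}$; once that correspondence is established the even-ness of $|E(G)|$ is forced. Pinning down exactly why no two edges can carry the same set-label is the step I expect to require the most care.
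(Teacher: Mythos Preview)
The paper does not contain a proof of this theorem: it is quoted from \cite{GS14} as a known result and stated without argument, so there is no ``paper's own proof'' to compare your proposal against.

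As for the proposal itself, your overall plan---counting $|E(G)|$ via a bijection $f^{+}\colon E(G)\to\mathcal{P}(X)\setminus\{\emptyset,\{0\}\}$ and reading off the parity of $2^{|X|}-2$---is exactly the right shape, and it is almost certainly the argument intended in \cite{GS14}. The gap you flag, injectivity of $f^{+}$, is real \emph{if} one reads the definition of IASGL in this paper literally (it says only that $f$ is an IASL with $f^{+}(E(G))=\mathcal{P}(X)\setminus\{\emptyset,\{0\}\}$, which is merely an image condition). However, the paper's own usage resolves the ambiguity: Theorem~\ref{T-IASGL6a} refers to the same notion as a ``graceful IASI'', and an IASI by definition has $f^{+}$ injective. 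So injectivity of $f^{+}$ is part of the hypothesis, not something you must derive from the graceful structure; there is no combinatorial obstacle to overcome, and the ``step requiring the most care'' that you anticipate simply does not arise. With that clarification your argument is complete in one line: $f^{+}$ is a bijection onto a set of size $2^{|X|}-2$, hence $|E(G)|=2^{|X|}-2$ is even.
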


\begin{thm}\label{T-IASGL6a}
{\rm \cite{GS14}} Let $X$ be a non-empty finite set of non-negative integers. Then, a graph $G$ admits a graceful IASI if and only if the following conditions hold.
\begin{enumerate}\itemsep0mm
\item[(a)] $0\in X$ and $\{0\}$ be a set-label of some vertex, say $v$, of $G$
\item[(b)] the number of pendant vertices in $G$ is the number of subsets of $X$ which are not the non-trivial summands of any subsets of $X$.
\item[(c)] the minimum degree of the vertex $v$ is equal to the number of subsets of $X$ which are not the sumsets of any two subsets of $X$ or not non-trivial summands of any other subsets of $X$.
\item[(d)] the minimum number of pendant vertices that are adjacent to a given vertex of $G$ is the number of subsets of $X$ which are neither the non-trivial sumsets of any two subsets of $X$ nor the non-trivial summands of any subsets of $X$. 
\end{enumerate}
\end{thm}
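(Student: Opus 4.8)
The plan is to prove both directions by translating the three graph parameters appearing in (b)--(d) --- the number of pendant vertices, the (minimum) degree of the vertex labelled $\{0\}$, and its number of pendant neighbours --- into the set-theoretic data recording how the subsets of $X$ factor as sumsets. Throughout I would fix $x_n=\max X$ and write $v$ for the vertex with $f(v)=\{0\}$, supplied by Proposition \ref{P-IASGL0a}. The engine of the whole argument is a two-part dictionary. First, by Proposition \ref{P-IASGL2a}, a vertex with label $A$ can be adjacent to a vertex with label $B\ne\{0\}$ only when $A+B\subseteq X$; equivalently, a vertex labelled $A$ can leave $v$ at all precisely when $A$ is a non-trivial summand of some subset of $X$. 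Second, a subset $D\in\mathcal{P}(X)-\{\emptyset,\{0\}\}$ can be produced as an edge-label $f(p)+f(q)$ with both ends different from $\{0\}$ only when $D$ is a non-trivial sumset of two subsets of $X$; otherwise its only possible realisation is $\{0\}+D$, which is an edge at $v$ whose other end must therefore be labelled $D$.

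For the necessity direction I assume $f$ is a graceful IASI of $G$. Condition (a) is immediate, since a vertex labelled $\{0\}$ forces $0\in X$ and Proposition \ref{P-IASGL0a} provides $v$. For (b) I would show that a vertex $u\ne v$ is pendant exactly when $f(u)$ is not a non-trivial summand of any subset of $X$: the first part of the dictionary forbids such a $u$ from having any neighbour but $v$, so it is pendant and adjacent to $v$ (consistent with Proposition \ref{P-IASGL2}, which forces $x_n\in f(u)$); conversely each subset that is not a non-trivial summand still has to occur among the edge-labels, and the only edge able to carry it meets $v$, which forces that subset to appear as a pendant vertex label. Counting gives the stated equality. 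Items (c) and (d) refine the same bookkeeping through the second part of the dictionary: the vertices forced to be neighbours of $v$ are those whose labels are not non-trivial sumsets (unproducible by any other edge) together with those that are not non-trivial summands (unable to leave $v$), and the minimum degree of $v$ is the size of this union, giving (c); intersecting the two families isolates the labels whose vertices are compelled to be both adjacent to $v$ and pendant, giving the minimum in (d). Here Proposition \ref{O-IASGL1b} serves as a useful consistency check on the resulting count.

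For sufficiency I would reverse the construction. Starting from a graph $G$ whose parameters meet (b)--(d), put $\{0\}$ on the distinguished vertex $v$; use the pendant vertices guaranteed by (b) to carry, one apiece, the subsets of $X$ that are not non-trivial summands, so that each appears as the label of its unique incident edge. Next, to every remaining subset $D$ that is not a non-trivial sumset assign a further neighbour of $v$ labelled $D$, realising $D=\{0\}+D$; conditions (c) and (d) guarantee that the degree of $v$ and its stock of pendant neighbours are exactly sufficient to absorb all these forced incidences at once. Finally, each subset that is a non-trivial sumset $A+B$ (with $A,B\ne\{0\}$) is realised by joining the vertices already labelled $A$ and $B$. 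The labelling obtained is injective by construction, and the three edge-families between them produce each element of $\mathcal{P}(X)-\{\emptyset,\{0\}\}$, which is exactly the graceful condition on $f^{+}$.

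I expect the decisive obstacle to lie in the sufficiency construction: one must show that the doubly non-trivial subsets can be routed through interior edges $A+B$ without ever forcing an endpoint label to be $\{0\}$, without repeating a vertex label, and without creating an edge-label that escapes $X$ --- and this is exactly where the fine sumset combinatorics of $X$, together with the equality (rather than mere inequality) form of (c) and (d), must be invoked. A more delicate secondary point, already in the necessity direction, is excluding ``accidental'' pendant vertices, namely a vertex whose label is a non-trivial summand yet which happens to have degree one; ruling these out (or showing that the obligatory reappearance of its label as an edge-label elsewhere already accounts for it) is what upgrades the pendant count in (b) from an inequality to the stated equality.
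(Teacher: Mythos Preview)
The paper does not contain a proof of this statement. Theorem~\ref{T-IASGL6a} is quoted from \cite{GS14} as background (note the citation marker immediately after the theorem heading), and no proof environment follows it in the source; the paper uses it only as an ingredient in the proof of Theorem~\ref{T-TIASG2}. There is therefore nothing in this paper to compare your proposal against.

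As a side remark on the proposal itself: your two-part dictionary is the right organizing principle, and the necessity direction is essentially the argument one would expect. You are also right to flag the sufficiency construction as the real difficulty. In fact the issue you isolate---routing each doubly non-trivial subset through an interior edge $A+B$ without re-using vertex labels and without the edge-labels colliding---is not obviously resolvable from conditions (b)--(d) alone as stated, since those conditions are cardinality constraints and say nothing about which specific summand pairs are available or whether the required adjacencies can be packed into a single simple graph. Your secondary worry about ``accidental'' pendant vertices is likewise genuine: nothing in the hypotheses visibly forbids a vertex whose label \emph{is} a non-trivial summand from nevertheless having degree one, so upgrading (b) from $\ge$ to $=$ needs an extra argument that you have not supplied. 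If you intend to pursue this, you should consult \cite{GS14} directly rather than this paper.
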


\begin{thm}\label{T-IASGL6b}
{\rm \cite{GS14}} A tree $G$ is an IASG-graph if and only if it is a star $K_{1,\, 2^n-2}$, for some positive integer $n$.
\end{thm}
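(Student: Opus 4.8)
The plan is to prove both implications: the converse (sufficiency) by exhibiting an explicit labeling, and the forward direction (necessity) by a counting argument that pins down all the parameters, followed by an induction on the maximal element of the ground set.

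For sufficiency, suppose $G=K_{1,\,2^n-2}$ and fix a ground set $X$ with $|X|=n$ and $0=\min X$. I would assign $\{0\}$ to the central vertex and distribute among the $2^n-2$ leaves, bijectively, the $2^n-2$ members of $\mathcal{P}(X)-\{\emptyset,\{0\}\}$. This $f$ is injective, and since $\{0\}+A=A$ for every $A\subseteq X$, each pendant edge inherits the label of its leaf; hence $f^{+}(E(G))=\mathcal{P}(X)-\{\emptyset,\{0\}\}$, so $f$ is an IASGL. This direction is routine.

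For necessity, let a tree $G$ on $p$ vertices admit an IASGL $f$ with ground set $X$, $|X|=n$. First I would fix the parameters by counting. The injectivity of $f$ into the nonempty subsets gives $p\le 2^n-1$, while $f^{+}(E(G))=\mathcal{P}(X)-\{\emptyset,\{0\}\}$ forces at least $2^n-2$ distinct edge labels, so $|E(G)|\ge 2^n-2$. As $G$ is a tree, $|E(G)|=p-1$, and the two bounds collapse to $p=2^n-1$ and $|E(G)|=2^n-2$. Consequently \emph{every} nonempty subset of $X$ occurs as a vertex-label and $f^{+}$ is a \emph{bijection} onto $\mathcal{P}(X)-\{\emptyset,\{0\}\}$; this bijectivity is the engine of the rest of the argument. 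By Proposition~\ref{P-IASGL0a} some vertex $v_0$ carries $\{0\}$, and for any neighbour $u$ of $v_0$ the incident edge has label $\{0\}+f(u)=f(u)$. The heart of the proof is then to show that $v_0$ is adjacent to every other vertex, i.e. that $G$ is the star centred at $v_0$. Writing $0=e_1<e_2<\dots<e_n$ for the elements of $X$, recall that Proposition~\ref{P-IASGL2a} forces adjacent vertices with labels $A,C$ to satisfy $\max A+\max C\le e_n$. I would prove by downward induction on $j$ the statement $P(j)$: every vertex whose label $A$ has $\max A\ge e_j$ is a leaf adjacent to $v_0$. Suppose a vertex $u$ with $\max f(u)=e_j$ were adjacent to some $z\ne v_0$; then $f(z)\ne\{0\}$ gives $\max f(z)\ge 1$, so the edge label $L=f(u)+f(z)$ has $\max L=e_j+\max f(z)>e_j$. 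If $j=n$ this already contradicts $L\subseteq X$; if $j<n$ then $\max L\ge e_{j+1}$, so by the induction hypothesis $L$ is already realised by an edge at $v_0$, and the two edges bearing the label $L$ violate the bijectivity of $f^{+}$. Either way $u$ can only be adjacent to $v_0$. Since every label $A\ne\{0\}$ has $\max A\ge e_2$, running the induction down to $j=2$ shows all $2^n-2$ non-central vertices are leaves at $v_0$, i.e. $G\cong K_{1,\,2^n-2}$.

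The main obstacle is the necessity direction, and within it the inductive step just described: the naive maximal-element estimate bounds an edge label only from above, so by itself it does not preclude a nontrivial sumset $f(u)+f(z)$ from being a legitimate, previously unused label. The point I would stress is that this is exactly where the bijectivity of $f^{+}$ (forced by the opening count) becomes indispensable. Combined with the downward induction, it converts the statement ``the label $L$ is too large'' into ``the label $L$ is already taken,'' and it is this contradiction that collapses every subtree hanging off $v_0$ onto $v_0$ itself, yielding the star.
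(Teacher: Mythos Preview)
The paper does not prove Theorem~\ref{T-IASGL6b}; it is quoted from~\cite{GS14} as a preliminary result, so there is no in-paper argument to compare against. Judged on its own, your proof is correct. The sufficiency direction is immediate, and your necessity argument is sound: the counting step $p\le 2^{n}-1$ together with $|E(G)|\ge 2^{n}-2$ and $|E(G)|=p-1$ pins down both parameters and, crucially, forces $f^{+}$ to be a bijection onto $\mathcal{P}(X)\setminus\{\emptyset,\{0\}\}$. The downward induction on $j$ then goes through; the one point worth making explicit is why $\max L\ge e_{j+1}$ follows from $\max L>e_j$: it is because $L$, being an edge label, must lie in $\mathcal{P}(X)$, so $\max L\in X$ and hence is one of the $e_k$. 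With that observation your dichotomy (either $L\not\subseteq X$, or $L$ is already realised at $v_0$) is airtight, and the bijectivity of $f^{+}$ delivers the contradiction.
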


An integer additive set-indexer $f$ of a graph $G$, with respect to a ground set $X\subset \mathbb{N}_0$, is said to be a {\em topological IASL} (Top-IASL) of $G$ if $\mathcal{T}=f(V(G))\cup \{\emptyset\}$ is a topology on $X$. Certain characteristics and structural properties of Top-IASL-graphs have been studied in \cite{GS13}.

The following are the major results on topological IASL-graphs made in \cite{GS13}.

\begin{prop}\label{P-TIASI1}
{\rm \cite{GS13}} If $f:V(G)\to \mathcal{P}(X)-\{\emptyset\}$ is a Top-IASL of a graph $G$, then $G$ must have at least one pendant vertex.
\end{prop}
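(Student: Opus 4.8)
The plan is to exploit the single strongest feature a topology forces on the vertex labels — namely that the whole ground set must itself be an open set — and then to show that the vertex carrying the label $X$ cannot help being pendant. First I would observe that since $\mathcal{T}=f(V(G))\cup\{\emptyset\}$ is a topology on $X$ we have $X\in\mathcal{T}$, and because the hypothesis gives $\emptyset\notin f(V(G))$ there is a (unique, by injectivity of $f$) vertex $v$ with $f(v)=X$. Writing $x_n=\max X$, every vertex label is a subset of $X$, so $x_n$ is the largest element occurring in any label and $x_n\in f(v)$.

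Next I would analyse the neighbours of $v$. For any edge $vw$ the induced label is the sumset $f(v)+f(w)=X+f(w)$, and as an IASL taken with respect to the ground set $X$ this edge-label must lie in $\mathcal{P}(X)$, i.e.\ $X+f(w)\subseteq X$. Since $\max\bigl(X+f(w)\bigr)=x_n+\max f(w)$, the containment forces $x_n+\max f(w)\le x_n$, hence $\max f(w)=0$. As $f(w)$ is a non-empty set of non-negative integers, the only possibility is $f(w)=\{0\}$; in particular this shows $0\in X$ and $\{0\}\in\mathcal{T}$.

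Finally, injectivity of $f$ guarantees that at most one vertex carries the label $\{0\}$. Combined with the previous step, every neighbour of $v$ must be that one vertex, so $\deg(v)\le 1$. Because $G$ is connected and non-trivial, $v$ has at least one neighbour, whence $\deg(v)=1$ and $v$ is a pendant vertex, which establishes the claim.

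The step I expect to be the crux is the second paragraph: the whole argument rests on the edge-sums being constrained to lie in $\mathcal{P}(X)$, which is exactly the maximal-element mechanism already used in Proposition~\ref{P-IASGL2a}. Without that containment a vertex labelled $X$ could in principle be adjacent to several vertices, so the essential point is to make precise that an IASL with respect to the ground set $X$ keeps its induced edge-labels inside $X$; once that is in place, the degree bound on $v$ and hence the existence of a pendant vertex are immediate.
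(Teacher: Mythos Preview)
Your argument is correct. Note, however, that in this paper Proposition~\ref{P-TIASI1} is quoted from \cite{GS13} without a proof, so there is no proof here to compare against directly. That said, your reasoning is precisely the maximal-element mechanism that the paper itself invokes in the companion result Proposition~\ref{P-TIASI2} (and in Proposition~\ref{P-IASGL2a}): the vertex carrying the label $X$ can only be adjacent to the vertex labelled $\{0\}$, because any other neighbour would push the edge-sumset outside $X$. Your proof is in fact slightly sharper than what is needed for the bare statement, since you not only exhibit a pendant vertex but identify it as the one labelled $X$ and show its unique neighbour must carry $\{0\}$---which is exactly the content of Proposition~\ref{P-TIASI2} specialised to the set $X$ itself.
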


\begin{prop}\label{P-TIASI2}
{\rm \cite{GS13}} Let $f:V(G)\to \mathcal{P}(X)-\{\emptyset\}$ is a Top-IASL of a graph $G$. Then, the vertices whose set-labels containing the maximal element of the ground set $X$ are pendant vertices which are adjacent to the vertex having the set-label $\{0\}$.
\end{prop}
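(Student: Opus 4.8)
The plan is to exploit the single arithmetic constraint hidden in the induced edge map: since the induced function takes values in $\mathcal{P}(X)$, every edge set-label, being a sumset of two subsets of $X$, must again lie inside $X$; that is, for every edge $uv$ we have $f(u)+f(v)\subseteq X$. Writing $x_n$ for the maximal element of $X$ and recalling that the maximal element of a sumset $A+B$ of finite sets equals $\max A+\max B$, this constraint reads $\max f(u)+\max f(v)\le x_n$ for every edge $uv$. This inequality (the Top-IASL analogue of the condition appearing in Proposition \ref{P-IASGL2a}) is the engine of the whole argument, so my first step is to record it carefully from the definition of the induced edge-function.

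Next I would show that a vertex whose label contains $x_n$ can only have neighbours labelled $\{0\}$. Indeed, if $x_n\in f(v)$ then $\max f(v)=x_n$ because $f(v)\subseteq X$, and so for any neighbour $u$ the inequality above forces $\max f(u)\le x_n-x_n=0$. Since $f(u)$ is a non-empty subset of the non-negative integers, this compels $f(u)=\{0\}$ (which incidentally forces $0\in X$, so $\{0\}$ is a legitimate label). Hence every neighbour of $v$ carries the set-label $\{0\}$.

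The final step invokes injectivity together with connectivity. As $f$ is injective, at most one vertex of $G$ bears the label $\{0\}$, so $v$ can be adjacent to at most one vertex; and since $G$ is connected and non-trivial, $v$ has at least one neighbour. Thus $v$ has exactly one neighbour, namely the vertex labelled $\{0\}$, so $v$ is a pendant vertex adjacent to that vertex. Because the same reasoning applies verbatim to \emph{every} vertex whose label contains $x_n$, all such vertices are pendants hanging off the single $\{0\}$-labelled vertex. That at least one such vertex exists follows from the topology axioms: $X\in\mathcal{T}=f(V(G))\cup\{\emptyset\}$ and $X\ne\emptyset$, so $X$ is the label of some vertex, and $x_n\in X$.

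I expect the only genuine subtlety — rather than a deep obstacle — to be the justification that edge labels are confined to subsets of $X$; once that inequality is in hand, everything downstream is immediate. One should also dispose of the degenerate case $X=\{0\}$ (in which $G$ would be trivial), so that $x_n>0$ and the $\{0\}$-vertex is genuinely distinct from the vertices whose labels contain $x_n$.
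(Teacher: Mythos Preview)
The paper does not supply its own proof of this proposition; it is quoted from \cite{GS13} without argument, so there is nothing to compare against directly. Your proof is correct and is precisely the natural argument: the codomain constraint $f^+(uv)\in\mathcal{P}(X)$ forces $\max f(u)+\max f(v)\le x_n$ on every edge, so a vertex whose label contains $x_n$ can have only $\{0\}$-labelled neighbours, and injectivity together with connectivity then pin down the pendant structure. You rightly isolate the one point needing care, namely that edge labels must lie in $\mathcal{P}(X)$ rather than merely in $\mathcal{P}(\mathbb{N}_0)$; in this paper's framework (see the abstract and the analogous Proposition~\ref{P-IASGL2a}) that is indeed the operative convention for an IASL taken with respect to a fixed ground set $X$.
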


Hence, the ground set $X$, we consider here for set-labeling the vertices of a given graph $G$ must contain $0$ as its element.

\begin{thm}\label{T-TIASI2}
{\rm \cite{GS13}} A graph $G$, on $n$ vertices, admits a Top-IASL with respect to the discrete topology of the ground set $X$ if and only if $G$ has at least $2^{|X|-1}$ pendant vertices which are adjacent to a single vertex of $G$.
\end{thm}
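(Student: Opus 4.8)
The plan is to prove the two implications separately, treating necessity as a direct consequence of the earlier structural results and sufficiency by an explicit labelling, so I would open with the necessity direction. Suppose $f$ is a Top-IASL of $G$ realising the discrete topology, so that $\mathcal{T}=f(V(G))\cup\{\emptyset\}=\mathcal{P}(X)$. Since no vertex of an IASL-graph can carry the label $\emptyset$, injectivity of $f$ forces $f(V(G))=\mathcal{P}(X)-\{\emptyset\}$; in particular every non-empty subset of $X$ is the set-label of exactly one vertex and $|V(G)|=2^{|X|}-1$. As $0\in X$, the singleton $\{0\}$ is realised, say at a vertex $v$. I would then count the subsets of $X$ that contain the maximal element $x_n$: fixing $x_n$ and choosing freely among the remaining $|X|-1$ elements yields exactly $2^{|X|-1}$ such subsets, and for $|X|\ge 2$ none of them equals $\{0\}$. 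By Proposition \ref{P-TIASI2}, each vertex whose label contains $x_n$ is a pendant vertex adjacent to $v$, so $G$ has at least $2^{|X|-1}$ pendant vertices adjacent to the single vertex $v$, as required.

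For the sufficiency part I would argue by construction. Assume $G$ has $2^{|X|}-1$ vertices among which $2^{|X|-1}$ are pendant vertices adjacent to a common vertex $v$. Set $f(v)=\{0\}$, biject the $2^{|X|-1}$ subsets of $X$ containing $x_n$ onto these pendant vertices, and assign the remaining $2^{|X|-1}-2$ non-empty subsets (those not containing $x_n$, other than $\{0\}$) to the remaining vertices in any injective manner. By construction $f$ is injective and $f(V(G))\cup\{\emptyset\}=\mathcal{P}(X)$, the discrete topology on $X$; hence once $f$ is shown to be an IASL it is automatically a Top-IASL realising the discrete topology.

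The crux, and the step I expect to be the main obstacle, is verifying that $f$ is a genuine IASL, i.e. that $f^{+}(uv)=f(u)+f(v)\subseteq X$ for every edge $uv$. For the hub–pendant edges this is immediate, since $f(v)+f(w)=\{0\}+f(w)=f(w)\subseteq X$. The delicate edges are those joining two non-hub vertices: if such vertices carry labels $A_i,A_j$ with maximal elements $x_i,x_j$, then the maximal element of $A_i+A_j$ is $x_i+x_j$, so the containment $A_i+A_j\subseteq X$ fails whenever $x_i+x_j>x_n$. Here I would invoke Proposition \ref{P-IASGL2a}, which asserts that two vertices whose labels have maximal elements $x_i,x_j$ may be adjacent only when $x_i+x_j\le x_n$; this restricts which of the ``small'' subsets can label adjacent non-hub vertices and is precisely the condition that must hold for the construction to close up. Consequently the sufficiency direction genuinely needs the hypothesis that $G$ has exactly $2^{|X|}-1$ vertices together with the pendant condition, and the induced subgraph on the non-hub vertices must be sparse enough to respect the sumset bound $x_i+x_j\le x_n$; showing that the remaining subsets can always be distributed so as to honour this bound is the technical heart of the argument.
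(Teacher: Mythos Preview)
This theorem is quoted from \cite{GS13} and is \emph{not} proved in the present paper, so there is no in-paper proof to compare your proposal against. I therefore comment on the proposal itself.

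Your necessity argument is sound: once $f(V(G))\cup\{\emptyset\}=\mathcal P(X)$, injectivity forces $|V(G)|=2^{|X|}-1$, the set $\{0\}$ labels some vertex $v$, and Proposition~\ref{P-TIASI2} pins each of the $2^{|X|-1}$ subsets containing $x_n$ to a pendant neighbour of $v$.

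The sufficiency direction, however, is not a proof but a sketch that ends in an acknowledged gap. Two specific problems:
\begin{itemize}
\item Your appeal to Proposition~\ref{P-IASGL2a} is misplaced. That proposition is a \emph{necessary} condition on adjacency in an IASG-graph; here you are trying to \emph{construct} a Top-IASL on a given $G$, and you cannot assume $G$ already carries any integer additive set-labeling. The proposition tells you nothing about which non-hub edges are present in $G$.
\item If the edge-function $f^{+}$ is required to take values in $\mathcal P(X)$ (as the abstract's definition and Proposition~\ref{P-TIASI2} both suggest), the ``distribution'' you hope for need not exist. Take $X=\{0,1,3\}$ and let $G$ consist of a hub $v$ with four pendants together with a path $v\,u_1\,u_2$. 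Then $G$ has $2^{|X|}-1=7$ vertices and $2^{|X|-1}=4$ pendants at $v$, yet the labels $\{1\}$ and $\{0,1\}$ are forced onto $u_1,u_2$ and the edge $u_1u_2$ receives $\{1\}+\{0,1\}=\{1,2\}\not\subseteq X$. So under this reading of the definitions the stated condition is not sufficient, and no rearrangement of labels helps. Either \cite{GS13} imposes a hidden hypothesis on $X$ (e.g.\ $X=\{0,1,\dots,m\}$), or it takes $f^{+}$ with codomain $\mathcal P(\mathbb N_0)$, in which case your sumset obstruction evaporates but you must instead (i) explain why Proposition~\ref{P-TIASI2} still forces the pendant structure, and (ii) verify that $f^{+}$ is injective, since a Top-IASL is defined via an integer additive set-\emph{indexer}.
\end{itemize}
In short: the forward direction is fine; the converse is unfinished, and the obstacle you flag is genuine enough that it cannot be closed without first pinning down the intended codomain of $f^{+}$ in \cite{GS13}.
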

	
In this paper, we initiate further studies about the topological IASLs and find some new results on the topological IASL-graphs.

\section{Topological IASGL-Graphs}

Let $f$ be a topological integer additive set-indexer of a given graph $G$ with respect to a non-empty finite ground set $X$. Then, $\mathcal{T}=f(V(G))\cup \{\emptyset\}$ is a topology on $X$. Then, the graph $G$ is said to be a {\em $f$-graphical realisation} (or simply \textit{$f$-realisation}) of $\mathcal{T}$. 

In this context, the topology $\mathcal{T}$ of the ground set $X$ is said to {\em topologise} a graph $G$ with respect to an IASL $f$, if $\mathcal{T}-\{\emptyset\}=f(V(G))$. The elements of the sets $f(V)$ are called \textit{$f$-open sets} in $G$.

Is every topology of the ground set $X$ graphically realisable? The following theorem provides the solution to this problem.

\begin{thm}
Let $X$ be a non-empty finite set of non-negative integers. Then, every topology on $X$ consisting of $\{0\}$, is $f$-graphically realisable, where $f$ is an IASL defined on the graph concerned.
\end{thm}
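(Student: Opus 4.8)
The plan is to realise $\mathcal{T}$ by the most economical graph possible: one vertex for each non-empty open set, with every edge routed through the vertex carrying the label $\{0\}$. Concretely, since $\mathcal{T}$ is a topology we have $\emptyset, X\in\mathcal{T}$, and by hypothesis $\{0\}\in\mathcal{T}$; I would enumerate the non-empty members of $\mathcal{T}$ as $A_1=\{0\},A_2,\dots,A_m$, where $m=|\mathcal{T}|-1$, and build the star $G=K_{1,\,m-1}$ with centre $v_1$ and pendant vertices $v_2,\dots,v_m$, setting $f(v_i)=A_i$.

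First I would check that $f$ is an IASL and that it topologises $G$. Injectivity of $f$ is immediate because the $A_i$ are pairwise distinct, so $f$ is a legitimate set-labeling; and by construction $f(V(G))=\{A_1,\dots,A_m\}=\mathcal{T}-\{\emptyset\}$, whence $f(V(G))\cup\{\emptyset\}=\mathcal{T}$ is a topology on $X$. Thus $\mathcal{T}$ topologises $G$ and $G$ is an $f$-realisation of $\mathcal{T}$, provided $f$ is genuinely a (topological) IASI. To see that it is, I would compute the induced labels: every edge has the form $v_1v_i$, so $f^+(v_1v_i)=\{0\}+A_i=A_i$. Two facts drop out at once --- each edge label is again a subset of $X$ (indeed an open set), so $f^+$ lands in $\mathcal{P}(X)$, and the edge labels $A_2,\dots,A_m$ are pairwise distinct, so $f^+$ is injective. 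Hence $f$ is a topological IASI and the realisation is complete.

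The step that carries the real content --- and the reason the hypothesis $\{0\}\in\mathcal{T}$ cannot be discarded --- is producing a single connected, simple graph on the prescribed vertex labels whose induced edge labels remain under control. The identity $\{0\}+A=A$ is exactly the tool that accomplishes all of this simultaneously: using the $\{0\}$-vertex as a hub forces connectivity, keeps every sumset inside $\mathcal{P}(X)$, and makes the induced edge labels the distinct sets $A_2,\dots,A_m$ automatically. This is the converse side of Proposition~\ref{P-TIASI2} and the remark following it, which already force $0\in X$ and $\{0\}$ to appear as a set-label in any topological set-labeling. The only degenerate case, $X=\{0\}$ with $\mathcal{T}=\{\emptyset,\{0\}\}$, collapses $G$ to $K_1$ and may be set aside since only non-trivial graphs are under consideration.
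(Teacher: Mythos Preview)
Your proof is correct and follows essentially the same construction as the paper: both build the star $K_{1,|\mathcal{T}|-2}$, place $\{0\}$ at the centre, and distribute the remaining non-empty open sets on the pendant vertices. Your write-up is in fact more careful than the paper's, since you explicitly verify that the induced edge labels $\{0\}+A_i=A_i$ lie in $\mathcal{P}(X)$ and are pairwise distinct, and you flag the degenerate case $X=\{0\}$.
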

\begin{proof}
Let $\mathcal{T}=\{A_i\subseteq X:1\le n\le r\}$ be  a topology on $X$ such that $A_1=\emptyset$, $A_2=\{0\}$ and $A_r=X$, where $r$ is a positive integer less than or equal to $|X|$. Consider the graph $G\cong K_{1,r-2}$. Label the central vertex of $G$ by the set $A_2$ and label all $r-2$ pendant vertices of $G$ by the sets $A_i:3 \le i \le r$. Therefore, $f(V)=\mathcal{T}-A_1=\mathcal{T}-\{\emptyset\}$. Therefore, $f$ is a Top-IASL on $G$. Hence, $G$ is an $f$-realisation of the topology $\mathcal{T}$ of $X$.
\end{proof}

Motivated from the studies of topological set-graceful graphs made in \cite{AGPR}, we introduce the following notion.

\begin{defn}{\rm 
An integer additive set-graceful labeling $f$ of a graph $G$, with respect to a finite set $X$, is said to be a {\em topological integer additive set-graceful labeling} (Top-IASGL) if $f(V(G))\cup\{\emptyset\}$ is a topology on $X$ and the induced edge-function $f^+(E(G))= \mathcal{P}(X)-\{\emptyset, \{0\}\}$.}
\end{defn}

\begin{thm}
For a tree, the existence of an IASGL is equivalent to the existence of an Top-IASGL.
\end{thm}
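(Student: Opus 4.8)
The plan is to split the biconditional into its two implications, observe that one is essentially definitional, and reduce the other to the classification of set-graceful trees already recorded in the excerpt.

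The direction ``Top-IASGL $\Rightarrow$ IASGL'' requires no work: by the defining conditions, a Top-IASGL of $G$ is in particular an IASGL of $G$ (it merely satisfies the extra requirement that $f(V(G))\cup\{\emptyset\}$ be a topology). Hence if a tree admits a Top-IASGL it trivially admits an IASGL. All the content is in the converse.

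For the converse, I would start by assuming the tree $G$ admits an IASGL with respect to a ground set $X$ with $|X|=n$, and immediately invoke Theorem~\ref{T-IASGL6b} to conclude that $G$ must be the star $K_{1,\,2^n-2}$. I then construct an explicit labeling on this star and verify it is a Top-IASGL. Label the central vertex by $\{0\}$ (legitimate since $0\in X$, and forced in spirit by Proposition~\ref{P-IASGL0a}), and label the $2^n-2$ pendant vertices bijectively by the $2^n-2$ members of $\mathcal{P}(X)-\{\emptyset,\{0\}\}$. Because $\{0\}+A=A$ for every $A\subseteq X$, each pendant edge inherits exactly the set-label of its pendant endpoint, so $f^+(E(G))=\mathcal{P}(X)-\{\emptyset,\{0\}\}$ and $f$ is an IASGL. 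Finally I would check the topological condition: the vertex set-labels are $\{0\}$ together with every subset of $X$ other than $\emptyset$ and $\{0\}$, so $f(V(G))=\mathcal{P}(X)-\{\emptyset\}$ and therefore $f(V(G))\cup\{\emptyset\}=\mathcal{P}(X)$, the discrete topology on $X$. Thus the constructed IASGL is automatically a Top-IASGL, which establishes the converse and hence the equivalence.

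I do not expect a genuine obstacle here: once the tree classification is in hand, the only thing to notice is that the natural ``graceful'' labeling of the star $K_{1,\,2^n-2}$ exhausts all nonempty subsets of $X$ as vertex labels, and the full power set $\mathcal{P}(X)$ is trivially a topology (the discrete one). The real content of the statement is therefore structural rather than computational: the unique tree supporting an IASGL is so rigid that its graceful labeling cannot fail to be topological. The one point worth stating carefully in the write-up is the cardinality bookkeeping — that $K_{1,\,2^n-2}$ has exactly $2^n-1$ vertices, matching $|\mathcal{P}(X)-\{\emptyset\}|$, so injectivity of $f$ is consistent with every nonempty subset being used precisely once.
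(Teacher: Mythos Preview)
Your argument is correct and follows the same overall strategy as the paper: invoke Theorem~\ref{T-IASGL6b} to identify $G$ with $K_{1,\,2^{|X|}-2}$, then observe that the vertex labels exhaust $\mathcal{P}(X)-\{\emptyset\}$, so adjoining $\emptyset$ yields the discrete topology. The one difference is that you \emph{construct} a fresh labeling on the star and verify it is a Top-IASGL, whereas the paper argues that the \emph{given} IASGL $f$ is itself already topological: since $|V(G)|=2^{|X|}-1=|\mathcal{P}(X)-\{\emptyset\}|$ and $f$ is injective, $f(V(G))$ must equal $\mathcal{P}(X)-\{\emptyset\}$, forcing $f(V(G))\cup\{\emptyset\}=\mathcal{P}(X)$. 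Your version establishes existence just as well (and is arguably more self-contained, since it does not lean on injectivity of the original $f$), but the paper's pigeonhole observation is shorter and yields the mildly stronger conclusion that \emph{every} IASGL on a tree is automatically a Top-IASGL, not merely that one exists. You essentially note this yourself in your closing remark about cardinality bookkeeping; promoting that remark to the main argument would align your proof exactly with the paper's.
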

\begin{proof}
Let $G$ be a tree on $n$ vertices and $m$ edges, which admits an IASGL, with respect to a finite set $X$ of non-negative integers. Then, by Theorem \ref{T-IASGL6b}, $G\cong K_{1,l}$, where $l=2^{|X|}-2$. Then, $n=2^{|X|}-1$ and $m=2^{|X|}-2$. Therefore, $f(V(G))=\mathcal{P}(X)-\{\emptyset\}$ and $f(V(G))=\mathcal{P}(X)-\{\emptyset,\{0\}\}$. That is, $f$ is a topological IASGL of $G$. The converse part is trivial by the definition of a Top-IASGL of a graph $G$.
\end{proof}

In view of the results we have discussed so far, we can establish the following result.

\begin{thm}
If $G$ is an acyclic graph which admits a Top-IASGL, with respect to a ground set $X$, then $G$ is a star $K_{1, 2^r-2}$, where $r=2^{|X|}$. 
\end{thm}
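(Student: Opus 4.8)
The plan is to treat this statement as an immediate corollary of the two preceding theorems together with the standing convention of the paper, rather than as something requiring independent work. First I would invoke the global assumption stated in the introduction that all graphs under consideration are connected. An acyclic connected graph is precisely a tree, so the hypothesis ``acyclic graph which admits a Top-IASGL'' may be read as ``tree which admits a Top-IASGL''. This is the only place where the connectivity convention is used, and it is exactly what reduces the present statement to the tree case already settled.

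Next I would unwind the definition of a Top-IASGL: by definition it is an integer additive set-graceful labeling $f$ that \emph{additionally} satisfies the requirement that $f(V(G))\cup\{\emptyset\}$ be a topology on $X$. In particular, dropping the topology condition, every Top-IASGL is in particular an IASGL, so $G$ is an IASG-graph. Hence $G$ is a tree that admits an integer additive set-graceful labeling.

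At this point I would apply Theorem \ref{T-IASGL6b}, which characterises the trees admitting an IASGL as exactly the stars $K_{1,\,2^{n}-2}$ with $n=|X|$. Combined with the previous paragraph, this forces $G\cong K_{1,\,2^{|X|}-2}$. The preceding theorem on the equivalence of IASGL and Top-IASGL for trees also confirms that this star genuinely carries a Top-IASGL, so the hypothesis is not vacuous and the conclusion is consistent.

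I expect no genuine obstacle here: the result is simply a repackaging of the star-characterisation of IASG-trees with the observations that acyclicity plus connectivity yields a tree and that a Top-IASGL is \emph{a fortiori} an IASGL. The only point deserving care is the bookkeeping of the exponent in the conclusion. Theorem \ref{T-IASGL6b} gives $2^{|X|}-2$ pendant vertices, matching $n=|X|$ in its notation, so I would double-check that the symbol ``$r$'' in the statement is intended to equal $|X|$ (giving $K_{1,\,2^{|X|}-2}$) rather than $2^{|X|}$, since the latter reading would be inconsistent with the star obtained from the IASGL characterisation.
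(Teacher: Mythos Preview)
Your proposal is correct and matches the paper's own proof essentially line for line: the paper also argues that an acyclic (and, by the standing convention, connected) graph is a tree and then appeals directly to Theorem~\ref{T-IASGL6b} to conclude that $G$ is the star $K_{1,\,2^{|X|}-2}$. Your observation about the exponent is well taken: the paper's statement writes ``$r=2^{|X|}$'' but its own proof (and the preceding theorem) yield $K_{1,\,2^{|X|}-2}$, so the intended reading is indeed $r=|X|$.
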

\begin{proof}
Let $G$ be a acyclic graph which admits a Top-IASGL $f$. Since $G$ is acyclic, it is a tree. Then, as a consequence of  Theorem \ref{T-IASGL6b}, $G$ must be a star $K_{1, 2^r-2}$, where $r=2^{|X|}$.
\end{proof}

\begin{thm}\label{T-TIASGL1}
No connected $r$-regular graph $G$ admits a Top-IASGL.
\end{thm}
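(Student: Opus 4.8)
The plan is to argue by contradiction, combining the structural constraints that a Top-IASGL forces on a graph. Suppose, for contradiction, that a connected $r$-regular graph $G$ admits a Top-IASGL $f$ with respect to a ground set $X$. The first observation I would invoke is Proposition \ref{P-IASGL0a}: since $f$ is in particular an IASGL, the set $\{0\}$ must be the set-label of some vertex, say $v$, of $G$. Because $G$ is $r$-regular, $\deg(v)=r$, so $v$ has exactly $r$ neighbours.

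Next I would bring in the pendant-vertex constraints. By Proposition \ref{P-TIASI1} (applied to $f$ as a Top-IASL), $G$ must have at least one pendant vertex. But a pendant vertex has degree $1$, and in an $r$-regular graph every vertex has degree $r$; hence $r=1$. A connected $1$-regular graph is just the single edge $K_2$, so the argument reduces to ruling out this one case. The heart of the proof is therefore to show that $K_2$ cannot carry a Top-IASGL. Here I would count: for $K_2$ we need $f^+(E(G))=\mathcal{P}(X)-\{\emptyset,\{0\}\}$, yet $K_2$ has only a single edge, so its edge-function image has cardinality at most $1$. This forces $|\mathcal{P}(X)|-2=1$, i.e. $2^{|X|}=3$, which is impossible for any finite set $X$. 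Thus $K_2$ admits no Top-IASGL either, completing the contradiction.

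An alternative and perhaps cleaner route avoids the separate $K_2$ analysis by leaning on Proposition \ref{O-IASGL1b}: the vertex $v$ labelled $\{0\}$ must be adjacent to at least $1+2^{n-1}$ vertices, where $n=|X|$. In an $r$-regular graph this gives $r=\deg(v)\ge 1+2^{n-1}$. On the other hand, Proposition \ref{P-TIASI1} forces the existence of a pendant vertex and hence $r=1$, yielding $1\ge 1+2^{n-1}$, i.e. $2^{n-1}\le 0$, a contradiction for every positive integer $n$. I would present this version as the main line since it immediately collapses the two competing degree requirements into a single numerical impossibility, with no case analysis.

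The step I expect to be the main obstacle is reconciling the degree of the $\{0\}$-labelled vertex with the $r$-regularity hypothesis in a way that is logically airtight. In particular, one must be careful that Proposition \ref{P-TIASI1} genuinely applies, since it is stated for a Top-IASL with codomain $\mathcal{P}(X)-\{\emptyset\}$; I would note that every Top-IASGL is by definition a Top-IASL (the graceful condition only strengthens the edge-function requirement and does not weaken the topology condition), so the pendant-vertex conclusion transfers directly. Once that applicability is secured, the numerical contradiction is immediate, and no routine computation remains to be ground through.
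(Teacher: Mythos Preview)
Your proposal is correct and follows essentially the same route as the paper: obtain a pendant vertex, conclude $r=1$, reduce to $K_2$, and rule out $K_2$. The only cosmetic differences are that the paper invokes the IASG-graph pendant-vertex result rather than Proposition~\ref{P-TIASI1}, and it dismisses $K_2$ with the remark that no two-element ground set works, whereas your edge-count $2^{|X|}=3$ makes this explicit.
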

\begin{proof}
Let $G$ be an $r$-regular graph which admits a Top-IASGL $f$. Since $G$ is an IASG-graph, it must have at least one pendant vertex. If $v$ is a pendant vertex of $G$, then $d(v)=1$ and hence $G$ is $1$-regular. We know that $G=K_2$ is the only connected $1$-regular graph. But there exists no two element set which induces an IASGL on $K_2$. Hence, no regular graphs admit a Top-IASGL. 
\end{proof}

\begin{rem}{\rm
In view of Theorem \ref{T-TIASGL1}, no $r$-regular graph admits a Top-IASGL for $r\ge 2$. Hence, for $n\ge 2$, a cycle $C_n$ or a path $P_n$ or a complete graph $K_n$ or any complete bipartite graph $K_{n,n}$ do not admit Top-IASGLs.

Also, no complete bipartite graphs other than certain star graphs admit Top-IASGL as they do not have pendant vertices.} 
\end{rem}

During previous studies it has been established that all graphs in general do not admit Top-IASLs. It is also evident that all Top-IASLs on a given graph $G$ does not induce an integer additive set-graceful labeling on $G$. In view of these facts, the questions about the characteristics and structural properties of graphs which admit Top-IASGLs arouse much interest. 

Since the empty set $\emptyset$ can not be the set-label of any vertex of $G$, it is obvious that there exist no non-trivial graphs admit a Top-IASGL. Hence, let us proceed to check the cases involving topologies which are not indiscrete or discrete topologies of the ground set $X$. The following theorem determines a necessary and sufficient condition for the existence of an Top-IASGL for a given graph $G$.

\begin{thm}\label{T-TIASG2}
Let $X$ be a non-empty finite set. Then, a graph $G$ admits a Top-IASGL if and only if the following conditions hold.
\begin{enumerate}[itemsep=0mm]
\item[(a)] $G$ has $2^{|X|}-2$ edges and at least $2^{|X|}-(\rho+1)$ vertices, where $\rho$ is the number of subsets of $X$, which can be expressed as the sumsets of two subsets of $X$.
\item[(b)] One vertex, say $v$, of $G$ has degree $\rho''$, which is the number of subsets of X which are neither the non-trivial summands of any subsets of $X$ nor the sumsets of any elements of $\mathcal{P}(X)$.
\item[(c)] $G$ has at least $\rho'$ pendant vertices  if $X$ is not a sumset of the subsets of it and has at least $1+\rho'$ pendant vertices if $X$ is a sumset of some subsets of it, where $\rho'$ is the number of subsets of $X$, which are not the sumsets of any subsets of $X$ and not a summand of any sub set of $X$.
\end{enumerate}
\end{thm}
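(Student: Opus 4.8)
The statement couples a purely combinatorial property of the ground set $X$ (the numbers $\rho,\rho',\rho''$) with the existence of a labeling, so I would prove the two implications separately, anchoring everything on the unique vertex $v_0$ carrying the label $\{0\}$ (guaranteed by Proposition \ref{P-IASGL0a}). The recurring tool is the identity $\{0\}+A=A$: an edge at $v_0$ reproduces the label of its other end, whereas an edge between two vertices labelled $B,C\neq\{0\}$ produces a genuine sumset $B+C$, and since $f$ is injective no edge can realise a self-sum $A+A$. Hence every $A\in\mathcal P(X)\setminus\{\emptyset,\{0\}\}$, which gracefulness forces to be an edge label, is realised either \emph{through} $v_0$ --- in which case $A$ is itself a vertex label adjacent to $v_0$ --- or by a proper decomposition $A=B+C$ with $B\neq C$ and $B,C\neq\{0\}$, both vertex labels of adjacent vertices. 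This dichotomy drives the whole proof, and I would read each of (a)--(c) as a count of the subsets falling on one side or the other.

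For necessity I would assume $f$ is a Top-IASGL and first pin down the edges. The $2^{|X|}-2$ sets in $\mathcal P(X)\setminus\{\emptyset,\{0\}\}$ are exactly the labels that must occur, and since connectedness forces every vertex whose label cannot expand inside $X$ to hang off $v_0$, each such label is realised exactly once; this gives the edge count in (a) and shows that $f$ is in fact a graceful IASI, so that the skeleton of Theorem \ref{T-IASGL6a} becomes available. The vertex bound in (a) then follows from the dichotomy: a subset that is not a proper non-trivial sumset can only be an edge label through $v_0$, hence is a vertex label, and counting these together with $v_0$ yields at least $2^{|X|}-(\rho+1)$ vertices. For (b) and (c) I would isolate the subsets that are not non-trivial summands: such a set cannot be the label of a neighbour of any vertex other than $v_0$, since any other adjacency would either expose it as a summand or force an edge label outside $\mathcal P(X)$, so it is pendant at $v_0$; intersecting this class with the non-sumset class produces the degree value $\rho''$ at $v_0$ and the pendant count $\rho'$. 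Finally the topology requirement is layered on: $\mathcal T=f(V(G))\cup\{\emptyset\}$ must contain $X$ and be closed under unions and intersections, which forces extra subsets to be vertex labels beyond those demanded by gracefulness alone; this is the source of the ``at least'' clauses and of the ``$+1$ if $X$ is a sumset'' correction, since whenever $X$ is itself a sumset its mandatory appearance as a vertex label --- a pendant at $v_0$ by Propositions \ref{P-IASGL2} and \ref{P-TIASI2} --- is an additional vertex.

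For sufficiency I would reverse the construction: put $\{0\}$ at $v_0$, attach as pendants the subsets that are neither non-trivial sumsets nor non-trivial summands, realise each remaining required label once by a proper decomposition guaranteed by the definition of $\rho$, and adjoin whatever further vertices the closure of $\mathcal T$ demands, after which one checks directly that $f(V(G))\cup\{\emptyset\}$ is a topology and that $f^+$ is onto $\mathcal P(X)\setminus\{\emptyset,\{0\}\}$; when the closure forces every subset to be open this degenerates, via Theorem \ref{T-TIASI2}, to the discrete-topology star $K_{1,2^{|X|}-2}$. The part I expect to be genuinely hard is reconciling the two closure pressures that push in opposite directions --- gracefulness wants each subset to surface as an \emph{edge} label, preferably through a $v_0$-free edge so as not to inflate the degree of $v_0$, while the topology wants each union and intersection of vertex labels to surface as a \emph{vertex} label --- and keeping the bookkeeping of trivial versus non-trivial and of proper versus self sumsets and summands perfectly consistent, so that $\rho,\rho',\rho''$ emerge with exactly the stated values rather than off-by-one variants; this is also why the hypotheses must be phrased in terms of $X$ rather than of $G$ alone.
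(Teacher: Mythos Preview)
Your approach mirrors the paper's: both directions hinge on placing $\{0\}$ at a distinguished vertex $v_0$, exploiting $\{0\}+A=A$ to route every non-sumset label through $v_0$, forcing non-summand labels to be pendant there, and invoking the topology axiom to guarantee that $X$ itself appears as a (pendant) vertex label. Your articulation of the edge-label dichotomy and of the $+1$ correction in (c) is in fact more explicit than the paper's own argument, but the underlying strategy is the same.
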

\begin{proof}
First assume that $G$ is a graph with the given conditions hold. Define a labeling $f$ on $G$ as explained below.

Label the vertex, say $v$, having the degree $1+2^{|X|-1}$, by the set $\{0\}$ and label an adjacent pendant vertex of $v$ by the set $X$. If $A_i\subset X$ is neither a sumset of any two subsets of $X$ nor a summand of any subset of $X$, then label one of the remaining pendant vertex of $G$ by the set $A_i$. Now, label the remaining vertices of $G$ by the remaining subsets of $X$ injectively in such way that two vertices are adjacent in $G$ if the sumset of their set-labels are subsets of $X$. Since $G$ has at least $2^{|X|}-(\rho+1)$ vertices, all the subsets, except $\{0\}$, will be the set-labels of the edges of $G$. Therefore, this labeling is a Top-IASGL of $G$.

Conversely, assume that $G$ admits a Top-IASGL $f$. Then, by Theorem \ref{T-IASGL6a}, being a IASG-graph, $G$ has a vertex, say $v$, having degree at least $\rho''$, adjacent with the vertex having set-label $\{0\}$, where $\rho''$ is the number of subsets of X which are neither the non-trivial summands of any subsets of $X$ nor the sumsets of any elements of $\mathcal{P}(X)$. 

If $A_i\subset X$ is not a sumset of of any subsets of $X$, then it must be the set-label of a vertex in $G$ that is adjacent to $v$, the vertex having $\{0\}$ as its set-label. Then, out of all $2^X-1$ subsets of $X$, the  sets which are not the sumsets of the sub sets of $X$ must be the set-labels of some vertices of $G$. Therefore, $G$ must have at least $2^{|X|}-(\rho+1)$ vertices.

Let $A_i\ne \{0\}$ be a sub set  such that it is neither a sumsets of any subsets of $X$ nor a summand of any subset of $X$, then since $A_i$ must be al element of $f^+(E(G))$, it must be the set-label of a pendant vertex that is adjacent to $v$, the vertex having $\{0\}$ as its set-label. Also, since $f(v)\cup \emptyset$ is a topology on $X$, $X\in f(V)$, irrespective of whether it is a sumset of its sumset of its subsets or not. Then, by Theorem \ref{T-IASGL3}, $X$ must also be the set-label of one pendant vertex that is adjacent to $v$. Therefore, the minimum number of pendant vertices that are adjacent to $v$ must be $1+\rho'$ if $X$ is not a sumset of its subsets and $\rho'$ if $X$ is a sumset of its subsets.
\end{proof}

Figure \ref{G-Top-IASGL} depicts a topological integer additive set-graceful labeling of a graph.

\begin{figure}[h!]
\centering
\includegraphics[width=0.75\linewidth]{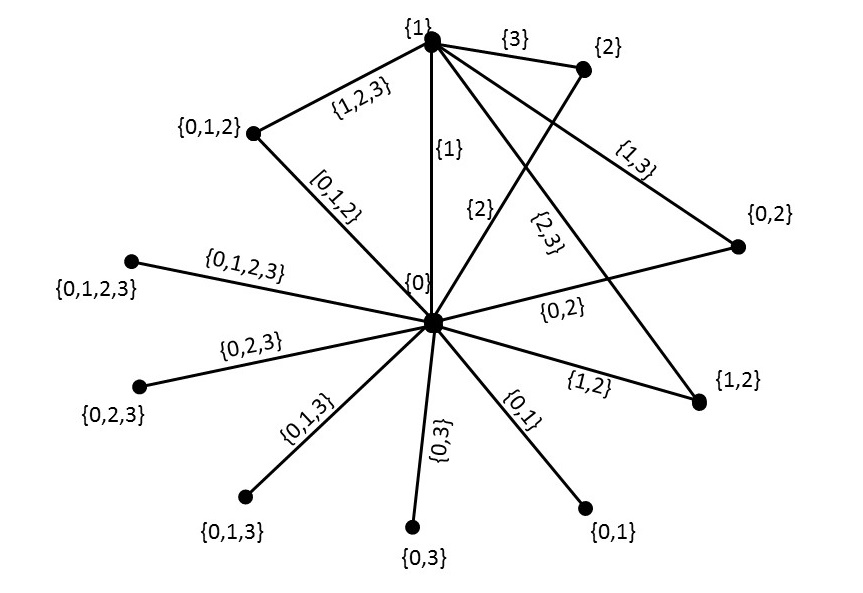}
\caption{A graph with a topological IASGL.}
\label{G-Top-IASGL}
\end{figure}

In this context, the following theorem discusses the admissibility of a Top-IASGL with respect to the discrete topology of the ground set $X$.

\begin{thm}\label{T-TIASG3}
A graph $G$ admits a Top-IASGL $f$ with respect to the discrete topology of a non-empty finite set $X$ if and only if $G\cong K_{1,2^{|X|}-2}$.
\end{thm}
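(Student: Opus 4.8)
The plan is to prove both implications, with the backward (sufficiency) direction being a direct construction and the forward (necessity) direction reducing to the already-established classification of set-graceful trees.

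For the sufficiency direction, suppose $G\cong K_{1,2^{|X|}-2}$. I would label its central vertex by $\{0\}$ and its $2^{|X|}-2$ leaves, injectively, by the remaining non-empty subsets of $X$, so that $f(V(G))=\mathcal{P}(X)-\{\emptyset\}$. Then $f(V(G))\cup\{\emptyset\}=\mathcal{P}(X)$ is precisely the discrete topology on $X$, and since every edge joins the centre to a leaf labelled $A$, its set-label is $\{0\}+A=A$; hence $f^+(E(G))=\mathcal{P}(X)-\{\emptyset,\{0\}\}$. Thus $f$ is a Top-IASGL with respect to the discrete topology, which settles this direction.

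For the necessity direction, assume $G$ admits a Top-IASGL $f$ whose vertex-label topology is the discrete topology $\mathcal{P}(X)$. Then $f(V(G))=\mathcal{P}(X)-\{\emptyset\}$, so every non-empty subset of $X$ is used exactly once and $|V(G)|=2^{|X|}-1$; in particular, by Proposition \ref{P-IASGL0a} (or simply because $\{0\}\in f(V(G))$), some vertex $v$ carries the label $\{0\}$. First I would pin down the number of edges: because $f$ is a (set-)graceful labelling, its induced edge-function is a bijection of $E(G)$ onto $\mathcal{P}(X)-\{\emptyset,\{0\}\}$, so $|E(G)|=2^{|X|}-2$. Since $G$ is connected and $|E(G)|=|V(G)|-1$, it follows that $G$ is a tree. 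Finally, a tree admitting an IASGL is a star by Theorem \ref{T-IASGL6b}, so $G\cong K_{1,2^{|X|}-2}$, completing the argument.

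The step I expect to be the main obstacle is the edge count $|E(G)|=2^{|X|}-2$, equivalently the edge-injectivity of $f^+$. The subtlety is that, for $|X|\ge 4$, a subset of $X$ can be a non-trivial sumset of two \emph{distinct} vertex-labels (for instance $\{0,1\}+\{0,1,2\}=\{0,1,2,3\}$), so $G$ is not forced to be a tree merely by the topological and image conditions; extra, label-repeating edges would be geometrically admissible were $f^+$ allowed to be non-injective. It is exactly the graceful requirement that $f^+$ be a bijection onto $\mathcal{P}(X)-\{\emptyset,\{0\}\}$ --- the same fact underlying the even-edge count of Theorem \ref{T-IASGL3} and the ``graceful IASI'' formulation of Theorem \ref{T-IASGL6a} --- that forbids such edges and yields $|E(G)|=|V(G)|-1$. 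Once this is secured, the tree structure and the reduction to Theorem \ref{T-IASGL6b} are immediate.
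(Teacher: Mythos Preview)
Your proof is correct and follows essentially the same route as the paper: for necessity you count $|V(G)|=2^{|X|}-1$ and $|E(G)|=2^{|X|}-2$, deduce that the connected graph $G$ is a tree, and invoke Theorem~\ref{T-IASGL6b}; for sufficiency you give the explicit star labeling, which is a fuller version of the paper's terse argument. Your closing paragraph on edge-injectivity of $f^{+}$ is a worthwhile clarification of a point the paper takes for granted when passing from $|f^{+}(E(G))|=2^{|X|}-2$ to $|E(G)|=2^{|X|}-2$.
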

\begin{proof}
If $G\cong K_{1,2^{|X|}-2}$, then by Theorem \ref{T-TIASGL1}, $G$ is an $f$-realisation of the discrete topology of $X$. Then, $f$ is a topological IASGL of $G$.

Conversely, assume that $G$ admits a Top-IASGL, say $f$, with respect to the discrete topology of a non-empty finite set $X$. Therefore, $|f(V(G))|=2^{|X|}-1$ and $|f^+(E(G))|=2^{|X|}-2$. That is, $|E(G)|=|V(G)|-1$. Therefore, $G$ is a tree on $2^{|X|}-1$ vertices. Then, by Theorem \ref{T-IASGL6b}, $G\cong K_{1,2^{|X|}-2}$.
\end{proof}

Figure \ref{G-Top-IASGLa} depicts a topological integer additive set-graceful labeling of a graph with respect to the discrete topology of the ground set $X$.

\begin{figure}[h!]
\centering
\includegraphics[width=0.65\linewidth]{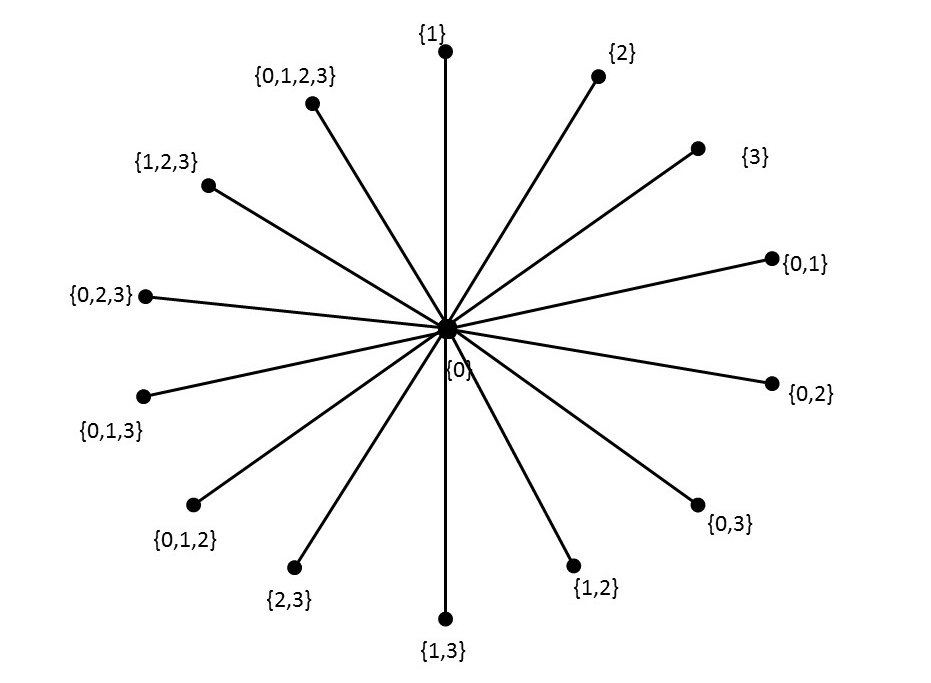}
\caption{A star graph with a topological IASGL.}
\label{G-Top-IASGLa}
\end{figure}

\section{Conclusion}
In this paper, we have discussed about certain the properties and characteristics of topological IASL-graphs. More properties and characteristics of TIASLs, both uniform and non-uniform, are yet to be investigated. The following are some problems which demand further investigation.

\begin{prob}{\rm
Characterise the graphs which are Top-IASS-graphs but not Top-IASG-graphs.}
\end{prob}

\begin{prob}{\rm
Find the smallest ground set $X$, with respect to which a given graph admits a Top-IASGL.}
\end{prob}

\begin{prob}{\rm
Find the smallest ground set $X$, with respect to which a given graph admits a Top-IASSL.}
\end{prob}

The problems of establishing the necessary and sufficient conditions for various graphs and graph classes to have certain other types IASLs are also still open. Studies about those IASLs which assign sets having specific properties, to the elements of a given graph are also noteworthy. All these facts highlight a wide scope for further studies in this area.

\end{document}